\documentclass[journal]{IEEEtran}
\usepackage{amsmath}
\usepackage{amssymb}
\usepackage{algorithm}
\usepackage{tcolorbox}
\usepackage{tikz}
\usepackage[noend]{algpseudocode}
\usepackage{graphicx}
\usepackage{pgfplots}
\usepackage{mathtools}
\usepackage[noadjust]{cite}
\usepackage{subcaption}
\usepackage{pgfplotstable}
\usepackage{todonotes}
\usepackage{url}
\usepackage{hyperref}

\pgfplotsset{compat=1.18}

\algrenewcommand\textproc{\textsc}

\DeclareMathOperator*{\argmin}{arg\,min}

\DeclareMathOperator{\col}{col}

\title{Reference Condensation for\\Model Predictive Control with Preview}

\author{Daniel Arnstr{\"o}m%
\thanks{E-mail: daniel.arnstrom@gmail.com.}}

\begin{document}

\definecolor{set19c1}{HTML}{E41A1C}
\definecolor{set19c2}{HTML}{377EB8}
\definecolor{set19c3}{HTML}{4DAF4A}
\definecolor{set19c4}{HTML}{984EA3}
\definecolor{set19c5}{HTML}{FF7F00}

\maketitle

\newtheorem{proposition}{Proposition}
\newtheorem{lemma}{Lemma}
\newtheorem{corollary}{Corollary}
\newtheorem{remark}{Remark}
\newtheorem{theorem}{Theorem}
\newtheorem{definition}{Definition}
\newtheorem{assumption}{Assumption}
\newtheorem{example}{Example}
\newtheorem{problem}{Problem}

\pgfplotstableread{data/step_nopreview.dat}{\stepnopreview}
\pgfplotstableread{data/step_preview.dat}{\steppreview}
\pgfplotstableread{data/step_refcond.dat}{\steprefcond}
\pgfplotstableread{data/step_virtual.dat}{\stepvirtual}
\pgfplotstableread{data/step_avgref.dat}{\stepavgref}
\pgfplotstableread{data/condensation_matrix.dat}{\condensationmatrix}
\pgfplotstableread{data/sin_nopreview.dat}{\sinnopreview}
\pgfplotstableread{data/sin_preview.dat}{\sinpreview}
\pgfplotstableread{data/sin_refcond.dat}{\sinrefcond}
\pgfplotstableread{data/sin_avgref.dat}{\sinavgref}

\begin{abstract}
In model predictive control (MPC), preview information can greatly improve tracking.
Including preview information does, however, increase the parameter dimension linearly with the preview horizon, which increases online cost and, more importantly, the complexity of explicit MPC. We introduce \emph{reference condensation}, a method that compresses a future reference trajectory into a single setpoint through a linear map. For the unconstrained tracking problem, the map follows from a least-squares projection. For receding-horizon MPC, we also study a weighted variant that prioritizes the first applied control. Numerical experiments on a double integrator and a higher-order aircraft example show that the weighted condensation closely matches full preview while keeping the parameter dimension independent of the preview horizon.
\end{abstract}

\begin{IEEEkeywords}
Model predictive control, preview control, reference tracking, explicit MPC, parametric optimization.
\end{IEEEkeywords}

\section{Introduction}

In preview control~\cite{birla2015optimal,tomizuka1975optimal}, future setpoints are used to improve tracking. This fits naturally in model predictive control~(MPC)~\cite{rawlings2017model,borrelli2017predictive}, where a finite-horizon optimal control problem is solved at each time step and the prediction horizon already provides a window of future information. Fig.~\ref{fig:preview} highlights the benefit: with preview information, the controller anticipates the change in the reference and, hence, tracks it better.

The main limitation of preview in MPC is that it increases the number of parameters on which the optimal control action~$u^*$ depends. Without preview, the control depends on the state $x \in \mathbb{R}^{n_x}$ and one setpoint $r \in \mathbb{R}^{n_r}$, so $(x,r) \mapsto u^*$ and the parameter dimension is $n_x + n_r$. With preview over a horizon~$N$, the control depends on the full reference trajectory: $(x,r_1,\dots,r_N) \mapsto u^*$, so the dimension becomes $n_x + Nn_r$. For typical horizons ($N = 20$--$100$), this increase is large.

A higher parameter dimension affects both online and offline MPC. Online, larger parameter vectors must be communicated to, and processed at, the controller in each sampling instant. In \emph{explicit} MPC~\cite{bemporad2002explicit,alessio2009survey}, where the control law is pre-computed offline and stored as a lookup table, the effect is more severe: complexity typically grows exponentially with the number of parameters. As a result, preview is rarely used in explicit MPC, even though MPC is predictive by design.

\emph{Related work.}
Several MPC methods modify references for other reasons. Reference and command governors~\cite{garone2017reference} filter references to ensure constraint satisfaction and stability. Tracking MPC for piecewise constant references~\cite{limon2008mpc} and artificial-reference methods~\cite{krupa2024model} introduce virtual setpoints to guarantee feasibility, stability, and offset-free tracking. Unlike these approaches, we condense a reference trajectory into one setpoint to \emph{preserve preview benefits} while reducing the parameter dimension. Finite-horizon preview itself dates back to~\cite{tomizuka1975optimal}. Our contribution is a principled way to retain those benefits without carrying the full preview trajectory as a parameter.

\emph{Contribution.}
We introduce \emph{reference condensation}, which compresses a sequence of future references $r_1, \ldots, r_N$ into one setpoint through a linear map. For the unconstrained tracking problem we obtain the optimal unweighted map $\bar{r} = S \mathbf{r}$, while for receding-horizon MPC we also consider a weighted map $\bar{r}_W = S_W \mathbf{r}$ that emphasizes the first control action. Our main contributions are:
\begin{enumerate}
    \item A closed-form expression for the optimal unweighted condensation matrix $S$ (Theorem~\ref{th:closedform-refcond}), obtained by minimizing the deviation between the unconstrained preview and setpoint-tracking control sequences.
    \item Properties of the condensation maps: the condensed reference is an affine combination of the future references (Lemma~\ref{lem:affine}), the approximation error is bounded by the reference variation (Proposition~\ref{prop:error-bound}), and the induced closed-loop state mismatch is bounded (Proposition~\ref{prop:closedloop-bound}).
    \item Numerical experiments showing that reference condensation used in MPC achieves tracking performance nearly identical to full preview while reducing the parameter dimension from $n_x + Nn_r$ to $n_x + n_r$.
\end{enumerate}
From a practical perspective, reference condensation makes using preview information in explicit MPC tractable.

\emph{Notation.}
We define the weighted norm ${\|x\|_M^2} \triangleq x^\top M x$ for a matrix $M \succeq 0$. The identity matrix of size $n$ is denoted~$I_n$, and the matrix $\mathbf{I}_{n \times N}$ denotes $N$ copies of $I_n$ stacked vertically. When the dimensions are clear from context we write~$\mathbf{I}$. A vector of ones of length~$n$ is denoted $\mathbf{1}_n$. Bold font denotes stacked vectors and matrices: $\mathbf{u} \triangleq (u_0, \ldots, u_{N-1})$ and $\mathbf{r} \triangleq (r_1, \ldots, r_N)$. The Moore--Penrose pseudoinverse is denoted~$(\cdot)^\dagger$ and the largest singular value of a matrix~$M$ is denoted~$\bar{\sigma}(M)$.

The rest of the paper is organized as follows. Section~\ref{sec:problem} formulates the problem. Section~\ref{sec:refcond} derives the condensation and its properties. Section~\ref{sec:mpc} discusses constrained and explicit MPC. Section~\ref{sec:experiments} presents numerical experiments.%

\section{Problem Formulation}
\label{sec:problem}
Consider discrete-time linear systems of the form
\begin{equation}
    \label{eq:dynamics}
    x_{k+1} = A x_k + B u_k,
\end{equation}
where $x_k \in \mathbb{R}^{n_x}$ and $u_k \in \mathbb{R}^{n_u}$ denote the state and control input at time step~$k$, and $A$, $B$ are matrices of appropriate dimensions. At each time step,  our objective is that a linear combination $C x_k$ of the state should track a reference~$r_k \in \mathbb{R}^{n_r}$, where $C \in \mathbb{R}^{n_r \times n_x}$, while limiting control effort. This is formalized with the quadratic objective
\begin{equation}
    \label{eq:cost}
    J(\mathbf{u}; x_0, \mathbf{r}) = \sum_{k=1}^{N} \|C x_k - r_k\|_{Q}^2 + \sum_{k=0}^{N-1}\|u_k\|_R^2,
\end{equation}
where the weights $Q \succeq 0$ and $R \succ 0$, the tracking matrix~$C$, and the horizon $N \in \mathbb{Z}_{>0}$ are assumed to have been selected appropriately.

Given a state~$x_0$ and reference trajectory $\{r_k\}_{k=1}^N$, the optimal control sequence $\mathbf{u}^*$ is given by the finite-horizon linear-quadratic tracking problem~\cite[Ch.~4]{anderson2007optimal}:
\begin{equation}
    \label{eq:tracking-lqc}
    \begin{aligned}
        \mathbf{u}^*\!\left(x_0, \mathbf{r}\right) \triangleq  &\argmin_{\mathbf{u}} \;J(\mathbf{u}; x_0, \mathbf{r})\\
                                                               &\text{ s.t.} \; x_{k+1} = A x_k + B u_k, \; k = 0, \ldots, N{-}1,\\
        & x_0 \text{ given.}
    \end{aligned}
\end{equation}
Our goal is to find a \emph{constant} setpoint~$\bar{r}$ such that replacing the full reference trajectory with~$\bar{r}$ gives the best matching control sequence. We formalize this as follows.

\begin{tcolorbox}
\begin{problem}[Reference condensation]
    \label{prob:ref-cond}
    Given a reference trajectory $\mathbf{r} = (r_1, \ldots, r_N)$, find
    \begin{equation}
        \label{eq:optimal-condensation}
        \bar{r} = \argmin_{\bar{r}} \left\|\mathbf{u}^*\!\left(x_0,\mathbf{r}\right)-\mathbf{u}^*\!\left(x_0, \mathbf{I}\bar{r}\right)\right\|_2^2,
    \end{equation}
    where $\mathbf{I} \bar{r} = (\bar{r}, \ldots, \bar{r})$ denotes a constant trajectory.
\end{problem}
\end{tcolorbox}

\section{Reference Condensation}
\label{sec:refcond}

\subsection{Optimal Reference Condensation}

We first show that the optimal control law in~\eqref{eq:tracking-lqc} is affine in the state and reference. This leads directly to a closed-form solution of Problem~\ref{prob:ref-cond}.

\begin{lemma}[Tracking LQR]
    \label{lem:track-lqc}
    The optimal policy defined in~\eqref{eq:tracking-lqc} is affine:
    \begin{equation}
        \label{eq:u-closed}
        \mathbf{u}^*(x_0, \mathbf{r}) = F_x x_0 + F_r \mathbf{r},
    \end{equation}
    where $F_x \in \mathbb{R}^{Nn_u \times n_x}$ and $F_r \in \mathbb{R}^{Nn_u \times Nn_r}$ depend on $A$, $B$, $C$, $Q$, $R$, and~$N$.
\end{lemma}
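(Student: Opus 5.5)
The plan is to eliminate the states by condensing the dynamics, which turns \eqref{eq:tracking-lqc} into an unconstrained strictly convex quadratic program in $\mathbf{u}$. Its unique minimizer is then read off from the first-order optimality condition, and that minimizer is linear in $(x_0,\mathbf{r})$.

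\emph{Step 1: condense the dynamics.} Iterating \eqref{eq:dynamics} gives $x_k = A^k x_0 + \sum_{j=0}^{k-1} A^{k-1-j} B u_j$. Stacking $\mathbf{x} \triangleq (x_1,\ldots,x_N)$ then yields $\mathbf{x} = \Phi x_0 + \Gamma \mathbf{u}$, where
\begin{equation*}
\Phi = \col(A, A^2, \ldots, A^N),
\end{equation*}
and $\Gamma$ is the block lower-triangular matrix with $(k,j)$ block $A^{k-1-j}B$ for $j<k$ and zero otherwise. Defining $\mathbf{C} \triangleq I_N \otimes C$, $\mathbf{Q} \triangleq I_N\otimes Q$ and $\mathbf{R} \triangleq I_N \otimes R$, the cost \eqref{eq:cost} becomes
\begin{equation*}
J = \|\mathbf{C}\Phi x_0 + \mathbf{C}\Gamma\mathbf{u} - \mathbf{r}\|_{\mathbf{Q}}^2 + \|\mathbf{u}\|_{\mathbf{R}}^2 .
\end{equation*}
This is a quadratic function of $\mathbf{u}$ with no remaining constraints.

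\emph{Step 2: solve the quadratic program.} Let $G \triangleq \mathbf{C}\Gamma$. The Hessian is $2H$ with $H \triangleq G^\top \mathbf{Q} G + \mathbf{R}$. Since $\mathbf{Q}\succeq 0$ and $\mathbf{R}\succ 0$ (because $R\succ 0$), we have $H\succ 0$. Hence $J$ is strictly convex and has a unique minimizer, characterized by a vanishing gradient:
\begin{equation*}
H\mathbf{u} + G^\top \mathbf{Q}(\mathbf{C}\Phi x_0 - \mathbf{r}) = 0 .
\end{equation*}
Solving gives \eqref{eq:u-closed} with
\begin{equation*}
F_x = -H^{-1}G^\top\mathbf{Q}\mathbf{C}\Phi, \qquad F_r = H^{-1}G^\top \mathbf{Q}.
\end{equation*}
These have sizes $Nn_u\times n_x$ and $Nn_u\times Nn_r$ respectively. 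They depend only on $A,B,C,Q,R,N$, because $\Phi$, $\Gamma$, $\mathbf{C}$, $\mathbf{Q}$ and $\mathbf{R}$ do.

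\emph{Main point requiring care.} The only nonroutine step is invertibility of $H$, which relies on $R\succ 0$; it must hold for $\mathbf{u}^*$ to be well defined as a single-valued map. Everything else is bookkeeping of the block indices in $\Gamma$. The resulting map is in fact linear, hence in particular affine, as claimed.
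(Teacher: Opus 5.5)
Your proposal is correct and follows the same route as the paper's appendix proof: you stack the dynamics into batch form, rewrite the cost as an unconstrained quadratic in $\mathbf{u}$, and set the gradient to zero, obtaining exactly $F_x = -H^{-1}\mathbf{B}^\top\mathbf{C}^\top\mathbf{Q}\,\mathbf{C}\mathbf{A}$ and $F_r = H^{-1}\mathbf{B}^\top\mathbf{C}^\top\mathbf{Q}$ (your $\Gamma$ and $\Phi$ are the paper's $\mathbf{B}$ and $\mathbf{A}$). Your explicit justification that $H \succ 0$ follows from $R \succ 0$, which gives a unique minimizer, is the same fact the paper records when it states $H \succ 0$.
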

\begin{IEEEproof}
    See the Appendix for explicit expressions of $F_x$ and $F_r$.
\end{IEEEproof}

This affine structure immediately yields a closed-form solution to Problem~\ref{prob:ref-cond}:

\begin{tcolorbox}
\begin{theorem}[Reference condensation: closed form]
    \label{th:closedform-refcond}
    A minimum-norm solution to Problem~\ref{prob:ref-cond} is $\bar{r} = S \mathbf{r}$, where
    \begin{equation}
        \label{eq:S-matrix}
        S \triangleq (F_r \mathbf{I})^\dagger F_r \in \mathbb{R}^{n_r \times Nn_r}.
    \end{equation}
    If $F_r \mathbf{I}$ has full column rank, then this solution is unique. In all cases, every minimizer is independent of the state~$x_0$.
\end{theorem}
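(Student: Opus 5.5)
By Lemma~\ref{lem:track-lqc}, the optimal control sequence is affine in $(x_0,\mathbf{r})$. We therefore substitute \eqref{eq:u-closed} into the objective of Problem~\ref{prob:ref-cond} for both the full-preview trajectory $\mathbf{r}$ and the constant trajectory $\mathbf{I}\bar{r}$. Because both sequences are evaluated at the same initial state, the state-dependent term $F_x x_0$ cancels:
\begin{equation*}
\mathbf{u}^*(x_0,\mathbf{r})-\mathbf{u}^*(x_0,\mathbf{I}\bar{r}) = F_r\mathbf{r} - F_r\mathbf{I}\bar{r}.
\end{equation*}
Problem~\ref{prob:ref-cond} thus reduces to the linear least-squares problem $\min_{\bar r}\|F_r\mathbf{r}-(F_r\mathbf{I})\bar r\|_2^2$, with matrix $M\triangleq F_r\mathbf{I}\in\mathbb{R}^{Nn_u\times n_r}$ and right-hand side $b\triangleq F_r\mathbf{r}$. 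Neither $M$ nor $b$ involves $x_0$.

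Next we invoke standard least-squares facts. First, the minimizers are exactly the solutions of the normal equations $M^\top M\bar r = M^\top b$. Second, the set of minimizers is the affine set $M^\dagger b + \ker M$. Third, $M^\dagger b$ is the unique minimizer of minimum Euclidean norm, since it lies in $\operatorname{range}(M^\top)=(\ker M)^\perp$. Combining these gives $\bar r = (F_r\mathbf{I})^\dagger F_r\mathbf{r} = S\mathbf{r}$. The stated dimensions of $S$ follow directly from $M^\dagger\in\mathbb{R}^{n_r\times Nn_u}$ and $F_r\in\mathbb{R}^{Nn_u\times Nn_r}$.

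For uniqueness, assume $M$ has full column rank. Then $\ker M=\{0\}$, so the minimizer set collapses to the single point $M^\dagger b=(M^\top M)^{-1}M^\top b$.

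For independence of $x_0$, note that the objective, after the cancellation above, does not depend on $x_0$. Its entire minimizer set $M^\dagger b+\ker M$ is therefore independent of $x_0$, and this holds whether or not $M$ has full column rank.

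There is no substantive obstacle. The one step that needs care is the cancellation of $F_x x_0$. It relies on the constant-reference problem being posed at the same $x_0$ and on $F_x$ being the same matrix for both references. Both hold because $F_x$ and $F_r$ depend only on $A$, $B$, $C$, $Q$, $R$ and $N$, as stated in Lemma~\ref{lem:track-lqc}.
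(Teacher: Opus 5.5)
Your proposal is correct and follows the paper's proof. You substitute the affine form from Lemma~\ref{lem:track-lqc}, cancel $F_x x_0$, and read off the minimum-norm least-squares solution $(F_r\mathbf{I})^\dagger F_r\mathbf{r}$, with uniqueness under full column rank and $x_0$-independence from the cancellation; the only difference is that you spell out the standard least-squares facts (minimizer set $M^\dagger b + \ker M$, with $M^\dagger b \in (\ker M)^\perp$) that the paper uses implicitly.
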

\end{tcolorbox}
\begin{IEEEproof}
    Substituting~\eqref{eq:u-closed} into Problem~\ref{prob:ref-cond} gives
    \begin{align}
        \bar{r} &= \argmin_{\bar{r}} \left\|F_x x_0 + F_r \mathbf{r} - F_x x_0 - F_r \mathbf{I} \bar{r}\right\|_2^2 \notag \\
                &= \argmin_{\bar{r}} \left\|F_r \mathbf{r} - F_r \mathbf{I} \bar{r}\right\|_2^2, \label{eq:ls-condensation}
    \end{align}
    which is a least-squares problem. The pseudoinverse expression $\bar{r} = (F_r \mathbf{I})^\dagger F_r \mathbf{r}$ gives its minimum-norm solution, and this solution is unique when $F_r \mathbf{I}$ has full column rank. Since the $x_0$-dependent terms cancel, every minimizer depends only on~$\mathbf{r}$.
\end{IEEEproof}

\begin{remark}[Geometric interpretation]
\label{rem:geometry}
The condensation has a simple geometric interpretation. The preview control sequence~$F_r \mathbf{r}$ lies in~$\mathbb{R}^{Nn_u}$. The control sequences achievable with a constant reference form the subspace $\col(F_r \mathbf{I}) \subseteq \mathbb{R}^{Nn_u}$. The condensation chooses $\bar{r}$ so that $F_r \mathbf{I} \bar{r}$ is the orthogonal projection of~$F_r \mathbf{r}$ onto~$\col(F_r \mathbf{I})$.
\end{remark}

\subsection{Properties of the Condensation}

We now state several useful properties of the condensation.

\begin{lemma}[Affine combination]
    \label{lem:affine}
    If $F_r \mathbf{I}$ has full column rank, then all row sums of~$S$ equal one. That is, $\bar{r}$ is an affine combination of the references~$r_1, \ldots, r_N$.
\end{lemma}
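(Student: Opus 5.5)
The plan is to show that $S\mathbf{I} = I_{n_r}$. This is the matrix form of the statement. Writing $S = [S_1, \ldots, S_N]$ with blocks $S_k \in \mathbb{R}^{n_r \times n_r}$, we have $\bar{r} = \sum_{k=1}^N S_k r_k$. The identity $S\mathbf{I} = \sum_k S_k = I_{n_r}$ says exactly that the block weights sum to the identity; in the scalar case $n_r = 1$, this means the row of $S$ sums to one. More generally, it means that every row of $S\mathbf{I}$ is the corresponding unit vector, so each coordinate of $\bar{r}$ is built from the references with weights that sum to one. Equivalently, if all references are equal, $r_1 = \cdots = r_N = c$, then $\bar{r} = c$, which is the defining property of an affine combination.

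The key step is a direct computation from the definition in Theorem~\ref{th:closedform-refcond}:
\begin{equation*}
S\mathbf{I} = (F_r\mathbf{I})^\dagger F_r \mathbf{I} = (F_r\mathbf{I})^\dagger (F_r\mathbf{I}).
\end{equation*}
For any matrix $M$, the product $M^\dagger M$ is the orthogonal projector onto the row space of $M$. When $M = F_r\mathbf{I}$ has full column rank, its row space is all of $\mathbb{R}^{n_r}$, and $M^\dagger = (M^\top M)^{-1}M^\top$. Hence $M^\dagger M = I_{n_r}$, and therefore $S\mathbf{I} = I_{n_r}$.

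An alternative, more conceptual argument uses Remark~\ref{rem:geometry}. If $\mathbf{r} = \mathbf{I}c$, then $F_r\mathbf{r}$ already lies in $\col(F_r\mathbf{I})$. The least-squares residual in \eqref{eq:ls-condensation} can then be made zero by taking $\bar{r} = c$, and by the uniqueness part of Theorem~\ref{th:closedform-refcond} this is the only minimizer. So $S\mathbf{I}c = c$ for all $c$.

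There is no real obstacle. The only care needed is interpretive: for $n_r > 1$, "row sums equal one" has to be read as the block identity $\sum_k S_k = I$, and I will state this explicitly.
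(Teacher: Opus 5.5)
Your proof is correct and uses the same key fact as the paper: $(F_r\mathbf{I})^\dagger (F_r\mathbf{I}) = I_{n_r}$ when $F_r\mathbf{I}$ has full column rank. The paper applies this fact to $\mathbf{1}_{Nn_r} = \mathbf{I}\,\mathbf{1}_{n_r}$ to get $S\mathbf{1}_{Nn_r} = \mathbf{1}_{n_r}$ directly, whereas your block identity $S\mathbf{I} = I_{n_r}$ is slightly stronger and gives the literal row-sum statement after right-multiplying by $\mathbf{1}_{n_r}$; so for $n_r > 1$ no reinterpretation of ``row sums'' is needed, and only the ``affine combination'' wording relies on your matrix-weight reading $\sum_k S_k = I_{n_r}$.
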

\begin{IEEEproof}
    The row sums of~$S$ are $S \mathbf{1}_{Nn_r}$. Recall that $\mathbf{I} = \mathbf{I}_{n_r \times N}$ denotes $N$ copies of~$I_{n_r}$ stacked vertically, so $\mathbf{1}_{Nn_r} = \mathbf{I} \cdot \mathbf{1}_{n_r}$. Using~\eqref{eq:S-matrix} then gives
    \begin{equation*}
        S \mathbf{1}_{Nn_r} = (F_r \mathbf{I})^\dagger F_r \mathbf{I} \cdot \mathbf{1}_{n_r} = I_{n_r} \cdot \mathbf{1}_{n_r} = \mathbf{1}_{n_r},
    \end{equation*}
    where the second equality uses that the pseudoinverse is a left inverse for full-column-rank matrices~\cite{golub2013matrix}.
\end{IEEEproof}

\begin{remark}
    The full-column-rank condition in Lemma~\ref{lem:affine} is a nondegeneracy assumption on the map from a constant reference to the induced open-loop control sequence. It depends on $(A,B,C,Q,N)$ and holds in our numerical examples.
\end{remark}

\begin{corollary}[Exact recovery]
    \label{cor:exact-recovery}
    If $r_k = c$ for all $k = 1, \ldots, N$ for some $c \in \mathbb{R}^{n_r}$, then $\bar{r} = c$. That is, constant references are preserved exactly by the condensation.
\end{corollary}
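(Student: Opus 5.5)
The plan is to write the constant trajectory in stacked form and substitute it directly into the closed-form map of Theorem~\ref{th:closedform-refcond}. If $r_k = c$ for all $k$, then $\mathbf{r} = (c, \ldots, c) = \mathbf{I} c$, with $\mathbf{I} = \mathbf{I}_{n_r \times N}$ as in the Notation. Using~\eqref{eq:S-matrix},
\begin{equation*}
    \bar{r} = S\mathbf{r} = (F_r \mathbf{I})^\dagger F_r \mathbf{I}\, c .
\end{equation*}

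The main step is to simplify $(F_r\mathbf{I})^\dagger F_r\mathbf{I}$. Under the standing full-column-rank hypothesis of Lemma~\ref{lem:affine}, the pseudoinverse is a left inverse, so $(F_r\mathbf{I})^\dagger F_r\mathbf{I} = I_{n_r}$ and hence $\bar{r} = c$. This is the same identity that drives the proof of Lemma~\ref{lem:affine}. In fact, Lemma~\ref{lem:affine} is the special case $c = \mathbf{1}_{n_r}$, while here the identity is used for an arbitrary vector $c$. The same hypothesis gives uniqueness of the minimizer by Theorem~\ref{th:closedform-refcond}, so $c$ is \emph{the} condensed reference.

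The only delicate point is the rank assumption, which the corollary leaves implicit. Without it, $(F_r\mathbf{I})^\dagger F_r\mathbf{I}$ is only the orthogonal projector onto the row space of $F_r\mathbf{I}$. Then $S\mathbf{I}c$ equals the minimum-norm representative of $c$ modulo $\ker(F_r\mathbf{I})$, which need not be $c$ itself.

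Two weaker statements still hold in the rank-deficient case. First, $\bar{r} = c$ is always \emph{a} minimizer of~\eqref{eq:ls-condensation}, since it attains the value zero. Second, every minimizer $\bar{r}$ reproduces the same control sequence, $F_r\mathbf{I}\bar{r} = F_r\mathbf{r}$, in line with Remark~\ref{rem:geometry}.

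I would therefore state the proof under the full-column-rank assumption of Lemma~\ref{lem:affine}, and add one sentence noting these two weaker facts for the degenerate case.
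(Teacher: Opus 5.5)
Your proof is correct and matches the paper's one-line argument: write $\mathbf{r} = \mathbf{I}c$ and use that $(F_r\mathbf{I})^\dagger F_r\mathbf{I} = I_{n_r}$. You are right to invoke the full-column-rank hypothesis of Lemma~\ref{lem:affine} explicitly, since the paper's proof relies on it without saying so (for instance, with $Q$ singular, $S\mathbf{I}c$ can differ from $c$), and your remarks on the rank-deficient case are accurate.
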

\begin{IEEEproof}
    If $\mathbf{r} = \mathbf{I} c$, then $\bar{r} = S \mathbf{I} c = (F_r \mathbf{I})^\dagger F_r \mathbf{I} \, c = c$.
\end{IEEEproof}

\begin{proposition}[Error bound]
    \label{prop:error-bound}
    The approximation error in the optimal control sequence satisfies
    \begin{equation}
        \label{eq:error-bound}
        \left\|\mathbf{u}^*(x_0, \mathbf{r}) - \mathbf{u}^*(x_0, \mathbf{I}\bar{r})\right\|_2 \leq \bar{\sigma}(F_r)\, \left\|\mathbf{r} - \mathbf{I} \bar{r}_{\mathrm{avg}}\right\|_2,
    \end{equation}
    where $\bar{r}_{\mathrm{avg}} = \frac{1}{N}\sum_{k=1}^N r_k$ is the average reference and $\bar{\sigma}(F_r)$ is the largest singular value of~$F_r$.
\end{proposition}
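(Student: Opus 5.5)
The plan is to exploit the optimality of $\bar{r}$ in the least-squares problem \eqref{eq:ls-condensation} by comparing it with the particular feasible choice $\bar{r}_{\mathrm{avg}}$. By Lemma~\ref{lem:track-lqc}, the state-dependent terms cancel, exactly as in the proof of Theorem~\ref{th:closedform-refcond}, so the left-hand side of \eqref{eq:error-bound} equals $\|F_r\mathbf{r}-F_r\mathbf{I}\bar{r}\|_2$. Here $\bar{r}=S\mathbf{r}$ is a minimizer of the least-squares problem \eqref{eq:ls-condensation}; the bound in fact holds for any minimizer, not only the minimum-norm one.

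The key step is the comparison itself. Since $\bar{r}$ minimizes $\|F_r\mathbf{r}-F_r\mathbf{I}\rho\|_2$ over all $\rho\in\mathbb{R}^{n_r}$, evaluating at $\rho=\bar{r}_{\mathrm{avg}}$ gives
\begin{equation*}
\|F_r\mathbf{r}-F_r\mathbf{I}\bar{r}\|_2 \le \|F_r(\mathbf{r}-\mathbf{I}\bar{r}_{\mathrm{avg}})\|_2 \le \bar{\sigma}(F_r)\,\|\mathbf{r}-\mathbf{I}\bar{r}_{\mathrm{avg}}\|_2 .
\end{equation*}
The last inequality is the definition of the induced $2$-norm, $\|F_r v\|_2\le\bar{\sigma}(F_r)\|v\|_2$. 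Geometrically, this is Remark~\ref{rem:geometry}: the orthogonal projection onto $\col(F_r\mathbf{I})$ is at least as close as the point $F_r\mathbf{I}\bar{r}_{\mathrm{avg}}$ in that subspace.

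There is no real obstacle here. The only points to state carefully are that the minimization is unconstrained over $\mathbb{R}^{n_r}$, so $\bar{r}_{\mathrm{avg}}$ is admissible, and that no rank assumption is needed. The choice of the average is not essential: any constant $c$ works in place of $\bar{r}_{\mathrm{avg}}$. The average is natural because it minimizes $\|\mathbf{r}-\mathbf{I}c\|_2$, so it gives the tightest bound of this form, and this is worth remarking.
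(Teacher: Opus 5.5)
Your proposal is correct and matches the paper's proof essentially step for step: cancel the $F_x x_0$ terms, compare the least-squares minimizer $\bar{r}$ with the admissible choice $\bar{r}_{\mathrm{avg}}$, and finish with $\|F_r v\|_2 \le \bar{\sigma}(F_r)\|v\|_2$. Your side remarks are also correct: the bound holds for any minimizer without a rank assumption, and the average gives the tightest bound of this form because it minimizes $\|\mathbf{r}-\mathbf{I}c\|_2$ over $c$.
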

\begin{IEEEproof}
    From~\eqref{eq:ls-condensation} we get that 
    \begin{equation}
        \begin{aligned}
            \left\|\mathbf{u}^*(x_0, \mathbf{r}) - \mathbf{u}^*(x_0, \mathbf{I}\bar{r})\right\|_2 &= \|F_r(\mathbf{r} - \mathbf{I}\bar{r})\|_2  \\
                                                                                                  &\leq \|F_r(\mathbf{r} - \mathbf{I}\bar{r}_{\mathrm{avg}})\|_2 \\
                                                                                                  &\leq \bar{\sigma}(F_r)\,\|\mathbf{r} - \mathbf{I}\bar{r}_{\mathrm{avg}}\|_2,
        \end{aligned}
    \end{equation}   
    where the first inequality uses optimality of~$\bar{r}$ (any constant reference gives an upper bound) and the second uses the submultiplicativity of the spectral norm.
\end{IEEEproof}

\begin{remark}
    The term $\|\mathbf{r} - \mathbf{I}\bar{r}_{\mathrm{avg}}\|_2^2 = \sum_{k=1}^N \|r_k - \bar{r}_{\mathrm{avg}}\|_2^2$ measures how much the reference varies around its mean over the horizon. Slowly varying references therefore lead to small approximation error.
\end{remark}

\begin{proposition}[Closed-loop tracking error bound]
    \label{prop:closedloop-bound}
    Consider the receding-horizon implementation of the unconstrained controller \eqref{eq:tracking-lqc}. Let $\{x_k^{\mathrm{prev}}\}$ and $\{x_k^{\mathrm{cond}}\}$ denote the closed-loop state trajectories under the preview and condensation controllers, respectively, starting from the same initial state $x_0^{\mathrm{prev}} = x_0^{\mathrm{cond}}$.
    Define $\Delta x_k \triangleq x_k^{\mathrm{prev}} - x_k^{\mathrm{cond}}$ and $A_{\mathrm{cl}} \triangleq A + B [F_x]_1$, where $[F_x]_1$ denotes the first $n_u$ rows of~$F_x$.
    Let $\mathbf{r}^{(j)}$ denote the preview window available at time step~$j$, and define $e_j \triangleq \mathbf{r}^{(j)} - \mathbf{I} S \mathbf{r}^{(j)}$.
    Suppose there exist constants $c \geq 1$ and $\lambda \in [0,1)$ such that $\|A_{\mathrm{cl}}^i\|_2 \leq c \lambda^i$ for all $i \geq 0$.
    Then
    \begin{equation}
        \label{eq:closedloop-bound}
        \|\Delta x_k\|_2 \leq \frac{c\,\|B [F_r]_1\|_2}{1 - \lambda} \max_{j \geq 0} \|e_j\|_2,
    \end{equation}
    where $[F_r]_1$ denotes the first $n_u$ rows of~$F_r$.
\end{proposition}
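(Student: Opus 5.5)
The approach is to write both closed loops as the same linear recursion driven by different reference inputs, subtract them to get an error recursion, and then unroll it using the assumed exponential bound on $A_{\mathrm{cl}}$.

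\emph{Closed-loop dynamics.} In receding horizon only the first control move is applied. By Lemma~\ref{lem:track-lqc}, at time $j$ the preview controller therefore applies
\begin{equation*}
u_j^{\mathrm{prev}} = [F_x]_1 x_j^{\mathrm{prev}} + [F_r]_1 \mathbf{r}^{(j)}.
\end{equation*}
The condensation controller solves the same problem with the constant trajectory $\mathbf{I} S \mathbf{r}^{(j)}$ (Theorem~\ref{th:closedform-refcond}), so it applies
\begin{equation*}
u_j^{\mathrm{cond}} = [F_x]_1 x_j^{\mathrm{cond}} + [F_r]_1 \mathbf{I} S \mathbf{r}^{(j)}.
\end{equation*}
Substituting into~\eqref{eq:dynamics} gives $x_{j+1}^{\mathrm{prev}} = A_{\mathrm{cl}} x_j^{\mathrm{prev}} + B[F_r]_1 \mathbf{r}^{(j)}$ and $x_{j+1}^{\mathrm{cond}} = A_{\mathrm{cl}} x_j^{\mathrm{cond}} + B[F_r]_1 \mathbf{I} S\mathbf{r}^{(j)}$.

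\emph{Error recursion.} Subtracting the two, the $F_x$ terms combine into $A_{\mathrm{cl}}\Delta x_j$, which yields
\begin{equation*}
\Delta x_{j+1} = A_{\mathrm{cl}} \Delta x_j + B[F_r]_1 e_j, \qquad \Delta x_0 = 0.
\end{equation*}
The key point is that both controllers share the same feedback gain $[F_x]_1$; this holds because $F_x$ does not depend on the reference. Unrolling the recursion gives
\begin{equation*}
\Delta x_k = \sum_{j=0}^{k-1} A_{\mathrm{cl}}^{k-1-j} B [F_r]_1 e_j.
\end{equation*}

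\emph{Bound.} By the triangle inequality, submultiplicativity, and $\|A_{\mathrm{cl}}^i\|_2 \le c\lambda^i$,
\begin{equation*}
\|\Delta x_k\|_2 \le c\,\|B[F_r]_1\|_2 \max_{j} \|e_j\|_2 \sum_{i=0}^{k-1}\lambda^i.
\end{equation*}
The geometric sum is at most $1/(1-\lambda)$, which gives~\eqref{eq:closedloop-bound}. If $\max_j\|e_j\|_2$ is infinite the bound holds trivially, so it may be read as a supremum.

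There is no real obstacle. The only step that needs care is confirming that the receding-horizon laws are the first block rows of the open-loop affine map in Lemma~\ref{lem:track-lqc}, with identical state feedback in both loops, so that the reference mismatch enters purely as an additive disturbance $B[F_r]_1 e_j$.
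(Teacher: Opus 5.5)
Your proposal is correct and matches the paper's proof essentially step for step: the same error recursion $\Delta x_{j+1} = A_{\mathrm{cl}}\Delta x_j + B[F_r]_1 e_j$ (your $[F_r]_1\mathbf{I}S$ equals the paper's $[F_r\mathbf{I}]_1 S$), the same unrolling from $\Delta x_0 = 0$, and the same geometric-sum bound. Your remark that the maximum over $j \geq 0$ should be read as a supremum, since it may not be attained, is a small but sensible addition.
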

\begin{IEEEproof}
    Both controllers share the same plant. At time step~$k$, the preview controller applies $u_k^{\mathrm{prev}} = [F_x]_1 x_k^{\mathrm{prev}} + [F_r]_1 \mathbf{r}^{(k)}$ and the condensation controller applies $u_k^{\mathrm{cond}} = [F_x]_1 x_k^{\mathrm{cond}} + [F_r \mathbf{I}]_1 S \mathbf{r}^{(k)}$. Subtracting the state updates yields
    \begin{equation*}
        \Delta x_{k+1} = A_{\mathrm{cl}} \, \Delta x_k  + B [F_r]_1 e_k.
    \end{equation*}
    Since $\Delta x_0 = 0$, repeated substitution gives
    \begin{equation*}
        \Delta x_k = \sum_{i=0}^{k-1} A_{\mathrm{cl}}^{k-1-i} B [F_r]_1 e_i.
    \end{equation*}
    Taking norms and using $\|A_{\mathrm{cl}}^i\|_2 \leq c \lambda^i$ gives
    \begin{equation*}
        \begin{aligned}
            \|\Delta x_k\|_2 &\leq c\, \|B [F_r]_1\|_2 \sum_{i=0}^{k-1} \lambda^{k-1-i} \|e_i\|_2 \\
                             &\leq c\, \|B [F_r]_1\|_2 \sum_{i=0}^{k-1} \lambda^i \max_{j \geq 0}\|e_j\|_2.
        \end{aligned}
    \end{equation*}
    Finally,  $\sum_{i=0}^{k-1}\lambda^i \leq 1/(1-\lambda)$ gives~\eqref{eq:closedloop-bound}.
\end{IEEEproof}

\begin{remark}
    Proposition~\ref{prop:closedloop-bound} is an input-to-state bound. The residual $e_j = \mathbf{r}^{(j)} - \mathbf{I} S \mathbf{r}^{(j)}$ is the part of the preview window that cannot be represented by one setpoint, and it acts as a disturbance on the closed-loop error dynamics. If each preview window is well approximated by its condensed setpoint, then the preview and condensation trajectories stay close. For constant references, $e_j = 0$ for all~$j$, so $\Delta x_k = 0$ for all~$k$ by Corollary~\ref{cor:exact-recovery}. The exponential-stability assumption on $A_{\mathrm{cl}}$ is standard for stabilizable/detectable LQR settings.
\end{remark}

\subsection{Weighted Condensation}
\label{sec:weighted}
In MPC, only the first control action~$u_0^*$ is applied before the problem is solved again. It is therefore most important to preserve~$u_0^*$ accurately. This motivates a weighted variant of Problem~\ref{prob:ref-cond}:
\begin{equation}
    \label{eq:weighted-condensation}
    \bar{r}_W = \argmin_{\bar{r}} \left\|F_r \mathbf{r} - F_r \mathbf{I} \bar{r}\right\|_W^2,
\end{equation}
where $W \succ 0$ is a weighting matrix. If $F_r \mathbf{I}$ has full column rank, then the unique minimizer is
\begin{equation}
    \label{eq:S-weighted}
    \bar{r}_W = S_W \mathbf{r}, \quad S_W \triangleq (\mathbf{I}^\top F_r^\top W F_r \mathbf{I})^{-1} \mathbf{I}^\top F_r^\top W F_r.
\end{equation}
Choosing $W = \mathrm{diag}(\rho I_{n_u}, I_{n_u}, \ldots, I_{n_u})$ with $\rho \gg 1$ gives more weight to~$u_0^*$, at the expense of lower accuracy for later control actions.

\begin{remark}[Weighted projection and practical use]
    The control sequence $F_r \mathbf{I}\bar{r}_W$ is the orthogonal projection of $F_r\mathbf{r}$ onto $\col(F_r \mathbf{I})$ in the inner product induced by~$W$. The unweighted condensation is recovered with $W = I$. In receding-horizon MPC only $u_0^*$ is applied before the problem is solved again, so diagonal weights with a larger first block are natural. Our constrained-MPC experiments use this weighted condensation unless stated otherwise.
\end{remark}

\section{Application to Model Predictive Control}
\label{sec:mpc}

Reference condensation is especially useful for MPC, where a problem of the form~\eqref{eq:tracking-lqc}, possibly with constraints, is solved repeatedly in receding-horizon form. The exact closed-form optimality result applies to the unconstrained tracking problem. In constrained MPC, condensation is used as a low-dimensional preprocessing step. In practice we use the weighted condensation map~$S_W$ because only the first control is applied online.

\subsection{Constrained MPC with Reference Condensation}

Consider a constrained MPC formulation
\begin{equation}
    \label{eq:mpc}
    \begin{aligned}
        u_0^*(x, \mathbf{r}) = \argmin_{u_0, \ldots, u_{N-1}} \;& J(\mathbf{u}; x, \mathbf{r}) \\
        \text{s.t.} \;& x_{k+1} = A x_k + B u_k, \\
        & u_k \in \mathcal{U}, \; x_k \in \mathcal{X}, \\
        & x_0 = x,
    \end{aligned}
\end{equation}
where $\mathcal{U}$ and $\mathcal{X}$ are polyhedral constraint sets. At each sampling instant, the controller receives the measured state~$x$ and a preview trajectory~$\mathbf{r} = (r_1, \ldots, r_N)$, but only the first control action $u_0^*$ is applied.

With reference condensation, the preview trajectory is first compressed to a constant setpoint $\bar{r}_c = S_W\mathbf{r}$. The MPC problem is then solved with the constant reference $\mathbf{I}\bar{r}_c$, yielding $u_0^*(x, \mathbf{I}\bar{r}_c)$. Algorithm~\ref{alg:refcond} summarizes the procedure.

\begin{algorithm}[t]
    \caption{MPC with Reference Condensation}
    \label{alg:refcond}
    \begin{algorithmic}[1]
        \Require System $(A,B,C)$, weights $(Q,R)$, horizon~$N$
        \State \textbf{Offline:} Compute $F_r$ via Lemma~\ref{lem:track-lqc} and $S_W$ via \eqref{eq:S-weighted}. 
        \State \textbf{Online:} At each time step, given state~$x$ and preview~$\mathbf{r}$:
        \State \quad $\bar{r}_c \gets S_W \mathbf{r}$
        \State \quad Solve MPC~\eqref{eq:mpc} with constant reference $\mathbf{I}\bar{r}_c$
        \State \quad Apply $u_0^*$ to the system
    \end{algorithmic}
\end{algorithm}

\begin{remark}[Optimality]
    For the unconstrained problem~\eqref{eq:tracking-lqc}, the map $\bar{r} = S\mathbf{r}$ is the optimal constant reference for the unweighted least-squares criterion, and $\bar{r}_W = S_W\mathbf{r}$ is optimal for the weighted criterion~\eqref{eq:weighted-condensation}. For constrained MPC~\eqref{eq:mpc}, both are generally suboptimal because the optimal control law is piecewise affine rather than affine. We therefore use condensation in constrained MPC as a structured approximation, and evaluate its closed-loop quality empirically in Section~\ref{sec:experiments}.
\end{remark}

\begin{remark}[Feasible-set invariance]
    Since the reference enters only through the objective in~\eqref{eq:mpc}, the feasible set is independent of the reference. Hence, \eqref{eq:mpc} has the same feasibility properties under a condensed reference as under full preview.
\end{remark}

\subsection{Complexity Reduction for Explicit MPC}
\label{sec:complexity}

For \emph{explicit} MPC~\cite{bemporad2002explicit}, where the control law is pre-computed as a piecewise affine function of the parameters and stored as a lookup table, reference condensation can reduce complexity dramatically.

Without condensation, preview MPC has $n_x + Nn_r$ parameters: the state and the full reference trajectory. With condensation, this drops to $n_x + n_r$: the state and the condensed setpoint~$\bar{r}$. This matters in three ways. First, the cost of solving the parametric quadratic program and enumerating critical regions grows quickly with parameter dimension~\cite{tondel2003evaluation}, so full-preview explicit MPC soon becomes difficult to handle. Second, each critical region stores a polyhedral description in $\mathbb{R}^{n_\theta}$ and an affine control law, so storage per region scales linearly with~$n_\theta$. Third, online point location is performed in $\mathbb{R}^{n_\theta}$, so its cost also depends on~$n_\theta$. By keeping $n_\theta = n_x + n_r$ for any horizon, reference condensation makes preview-capable explicit MPC much more practical.

\begin{remark}[Online overhead]
    Computing $\bar{r} = S\mathbf{r}$ requires $n_r \cdot Nn_r$ multiplications at each time step. This is negligible compared to solving a QP online and comparable to the cost of one lookup in an explicit MPC table. In practice, reference condensation adds almost no additional online cost.
\end{remark}

\begin{remark}[Applicability beyond MPC]
    Although we focus on MPC, reference condensation applies to any setpoint-tracking controller. If a controller has the form $u = \kappa(x, r)$ for a constant setpoint~$r$, then condensation gives a preview-aware version $u = \kappa(x, S\mathbf{r})$. This includes, for example, predictive PID controllers.
\end{remark}

\section{Numerical Experiments}
\label{sec:experiments}

First, we consider the continuous-time double integrator $\ddot{p} = u$ (position~$p$, velocity~$v = \dot{p}$), discretized with zero-order hold at sampling time $T_s = 0.1\,$s, resulting in the state-space system
\begin{equation}
    \label{eq:double-int}
    x_{k+1} = \begin{pmatrix} 1 & 0.1 \\ 0 & 1 \end{pmatrix} x_k + \begin{pmatrix} 0.005 \\ 0.1 \end{pmatrix} u_k,
\end{equation}
where $x = (p, v)^\top$. The tracking output is position: $C = \begin{pmatrix} 1 & 0 \end{pmatrix}$, with weights $Q = 1$ and $R = 1$ for simplicity. The prediction horizon is $N = 50$ (covering $5\,$s), and the control is constrained to $|u| \leq 1$. The reference is a unit step at $t = 5\,$s:
\begin{equation}
    \label{eq:ref-step}
    r(t) = \begin{cases} 1 & \text{if } t \geq 5, \\ 0 & \text{otherwise.} \end{cases}
\end{equation}

Four MPC controllers are compared:
\begin{itemize}
    \item \textbf{No preview}: uses only the current reference as setpoint.
    \item \textbf{Average ref.}: uses the arithmetic mean of the future references as setpoint, i.e., $\bar{r} = \frac{1}{N}\sum_{k=1}^N r_k$.
    \item \textbf{Ref.\ condensation} (proposed): compresses the future trajectory into one setpoint as described in Section~\ref{sec:refcond}.
    \item \textbf{Preview}: uses the full future reference trajectory.
\end{itemize}
All controllers solve the same constrained QP~\eqref{eq:mpc} (using the solver \texttt{DAQP} \cite{arnstrom2022dual}) at each step. Only the reference information changes. We use the weighted condensation in the main closed-loop comparisons, motivated by the results in Table~\ref{tab:weighted}. We have implemented reference condensation in the Julia package \texttt{LinearMPC.jl}, and code for reproducing all results in this paper is available at \url{https://github.com/darnstrom/refcond-experiments}.

\subsection{Tracking Performance}

\begin{figure}[t]
    \centering
    \begin{tikzpicture}[scale=1]
        \begin{axis}[
            xmin = 0, xmax=15,
            scale=0.95,
            xlabel={Time [s]},
            ylabel={Position $p$},
            legend style={at={(1.00,0.0)},anchor=south east},
            ymajorgrids,xmajorgrids,
            x post scale=1,
            y post scale=0.5,
            legend cell align={left},legend columns=1,
            legend style={nodes={scale=0.8, transform shape}},
            ]
            \addplot [black,very thick,dashed] table [x={t}, y={r1}] {\stepnopreview};
            \addplot [set19c1,thick] table [x={t}, y={x1}] {\stepnopreview};
            \addplot [set19c5,thick,dashdotted] table [x={t}, y={x1}] {\stepavgref};
            \addplot [set19c2,thick] table [x={t}, y={x1}] {\steppreview};
            \addplot [set19c3,thick,densely dotted] table [x={t}, y={x1}] {\steprefcond};
            \legend{Reference, No preview, Average ref., Preview, Ref.\ condensation}
        \end{axis}
    \end{tikzpicture}
    \caption{Step responses of four MPC controllers. The preview and proposed (reference condensation) controllers anticipate the step change at $t=5\,$s and begin responding earlier, achieving faster tracking. The average-reference baseline provides partial anticipation but substantially worse tracking than the proposed method.}
    \label{fig:preview}
\end{figure}
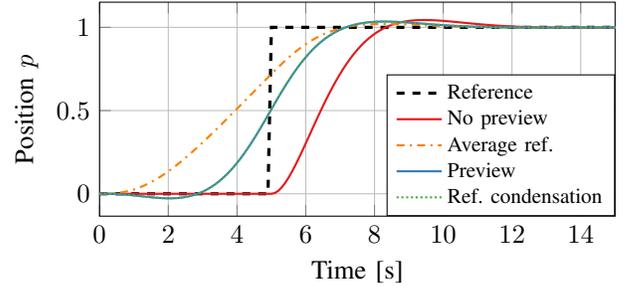

Fig.~\ref{fig:preview} shows the position trajectories. Both the preview and reference-condensation controllers anticipate the step and begin accelerating before $t = 5\,$s, which gives much faster tracking than no preview. The average-reference baseline also anticipates the change, but tracks it less well. The reference-condensation trajectory is visually indistinguishable from full preview.

\begin{figure}[t]
    \centering
    \begin{tikzpicture}[scale=1]
        \begin{axis}[
            xmin = 1, xmax=50,
            ymin = -0.02, ymax = 0.06,
            scale=0.95,
            xlabel={Horizon step $k$},
            ylabel={Weight $S_{1k}$},
            ymajorgrids,xmajorgrids,
            x post scale=1,
            y post scale=0.35,
            ]
            \addplot [set19c2,thick,mark=*,mark size=1.2pt] table [x={k}, y={weight}] {\condensationmatrix};
            \addplot [black,thin,dashed,domain=0:52] {0};
        \end{axis}
    \end{tikzpicture}
    \caption{Entries of the condensation matrix~$S$ (row vector for SISO). Near-term references receive the highest weight, with an approximately exponential decay. The weights sum to one (affine combination, Lemma~\ref{lem:affine}).}
    \label{fig:weights}
\end{figure}
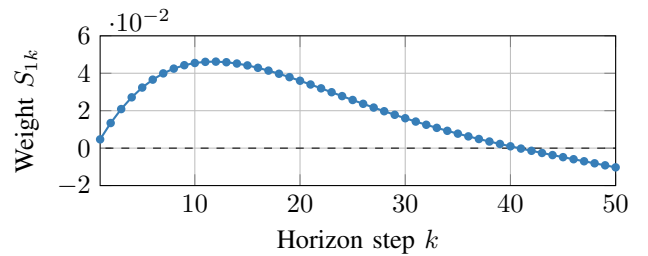

Fig.~\ref{fig:weights} shows the entries of the $1 \times 50$ condensation matrix~$S$ for this example. Near-term references ($k \in \{1, \ldots, 15\}$) receive the largest positive weights, with a peak around $k \approx 10$, while distant references receive small or slightly negative weights. The weights sum to one, consistent with Lemma~\ref{lem:affine}. This reflects the simple fact that near-future references affect the current control action more than distant ones.

\begin{figure}[t]
    \centering
    \begin{tikzpicture}[scale=1]
        \begin{axis}[
            xmin = 0, xmax=15,
            scale=0.95,
            xlabel={Time [s]},
            ylabel={Position $p$},
            legend style={at={(0.5,1.05)},anchor=south},
            ymajorgrids,xmajorgrids,
            x post scale=1,
            y post scale=0.5,
            legend cell align={left},legend columns=3,
            legend style={nodes={scale=0.8, transform shape}},
            ]
            \addplot [black,very thick,dashed] table [x={t}, y={r1}] {\sinnopreview};
            \addplot [set19c1,thick] table [x={t}, y={x1}] {\sinnopreview};
            \addplot [set19c5,thick,dashdotted] table [x={t}, y={x1}] {\sinavgref};
            \addplot [set19c2,thick] table [x={t}, y={x1}] {\sinpreview};
            \addplot [set19c3,thick,densely dotted] table [x={t}, y={x1}] {\sinrefcond};
            \legend{Reference, No preview, Average ref., Preview, Ref.\ condensation}
        \end{axis}
    \end{tikzpicture}
    \caption{Sinusoidal reference tracking ($r(t) = \sin(0.5t)$, $N=50$). Preview and reference condensation achieve nearly identical tracking, both dramatically outperforming no preview and the average-reference baseline. ISE values are reported in Table~\ref{tab:ise}.}
    \label{fig:sinusoidal}
\end{figure}
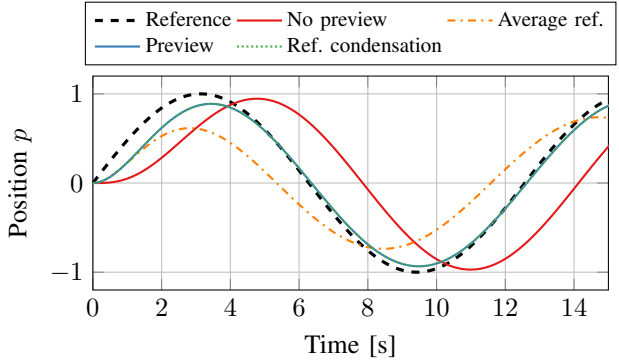

To quantify tracking, we compute the integrated squared error (ISE) $\int_0^T (p(t) - r(t))^2 \, dt$. Table~\ref{tab:ise} reports the ISE for the step and sinusoidal references. For the step reference, the ISE drops from $1.114$ with no preview to $0.628$ with average reference and to $0.259$ with preview and weighted reference condensation, corresponding to reductions of $44\%$ and $77\%$. The condensation causes no measurable loss relative to full preview, even though it reduces the reference information from~$50$ parameters to one scalar. The average-reference baseline captures only part of the preview benefit, which shows the value of using the control-sensitive condensation map~$S_W$ rather than a plain arithmetic average.

\begin{table}[t]
    \centering
    \caption{Integrated squared error (ISE) for step and sinusoidal references with horizon $N=50$.}
    \label{tab:ise}
    \begin{tabular}{l c c c c}
        \hline
        Reference & No preview & Average ref. & Ref.\ cond. & Preview \\
        \hline
        Step  & $1.114$ & $0.628$ & $\mathbf{0.259}$ & $0.259$ \\
        Sinusoid & $5.205$ & $2.26$ & $\mathbf{0.19}$ & $0.19$ \\
        \hline
    \end{tabular}
\end{table}

Finally, Fig.~\ref{fig:sinusoidal} shows tracking of the sinusoid $r(t) = \sin(0.5t)$ with horizon $N = 50$. The no-preview controller has clear phase lag, while the preview and weighted-condensation controllers track the reference closely. As reported in Table~\ref{tab:ise}, the average-reference baseline improves over no preview (ISE: $2.26$ vs.\ $5.205$), but it is still clearly worse than reference condensation (ISE: $0.19$). Again, reference condensation matches full preview.

\subsection{Horizon Study}

Table~\ref{tab:horizon} reports the ISE for different prediction horizons. The weighted condensation matches the preview ISE at every horizon. The preview benefit saturates around $N = 20$: beyond that, longer horizons do not help much because the full step transition is already visible.

The average-reference baseline performs well at moderate horizons ($N = 20$, ISE $0.29$) but \emph{degrades} at long horizons ($N = 100$, ISE $1.56$, worse than no preview). The reason is simple: the arithmetic mean averages over the whole horizon, so at long horizons the step is diluted by the many constant-reference samples that follow it. Reference condensation avoids this by weighting the references according to their effect on the control sequence, with the main experiments further emphasizing the first applied control.

\begin{table}[t]
    \centering
    \caption{Integrated squared error (ISE) for the step response as a function of the prediction horizon~$N$.}
    \label{tab:horizon}
    \begin{tabular}{r c c c c}
        \hline
        $N$ & No preview & Average ref. & Ref.\ cond. & Preview \\
        \hline
         5  & 5.47  & 5.20 & 5.14 & 5.14 \\
        10  & 1.99  & 1.42 & 1.30 & 1.30 \\
        20  & 1.07  & 0.29 & 0.23 & 0.23 \\
        50  & 1.11  & 0.63 & 0.26 & 0.26 \\
        75  & 1.11  & 1.18 & 0.26 & 0.26 \\
        100 & 1.11  & 1.56 & 0.26 & 0.26 \\
        \hline
    \end{tabular}
\end{table}

For short horizons ($N = 5$), the preview window does not reach the step transition when $t < 4.5\,$s, so the benefit is small. For $N \geq 20$, the controller can see the step up to $2\,$s ahead, which gives the anticipatory behavior in Fig.~\ref{fig:preview}. The condensation captures this benefit for every horizon.

\subsection{Aircraft Benchmark}

Next, we consider the aircraft example from \cite{aircraft1988}, which is a fourth-order, two-input, two-output system. We use horizon $N = 20$ and apply a square-wave reference of amplitude $0.05$ to the second regulated output, with switches at $t = 12\,$s and $t = 20\,$s. Table~\ref{tab:aircraft} reports the ISE summed over both regulated outputs.

\begin{table}[t]
    \centering
    \caption{Integrated squared error (ISE) for the aircraft benchmark with a square wave as reference.} 
    \label{tab:aircraft}
    \begin{tabular}{c c c c}
        \hline
        No preview & Average ref. & Ref.\ cond. & Preview \\
        \hline
        $0.01296$ & $0.00496$ & $\mathbf{0.00469}$ & $0.00469$ \\
        \hline
    \end{tabular}
\end{table}

The same pattern appears as in the double-integrator example. Preview reduces the ISE by about $63\%$ relative to no preview, the average-reference baseline captures part of the gain, and reference condensation is again indistinguishable from full preview. This shows that the result is not limited to a scalar second-order system.

\subsection{Weighted Condensation}
\label{sec:weighted-eval}

Next, we study the weighted condensation further. We generated $40$ random piecewise-constant references of length $15\,$s for the same constrained double integrator, using amplitudes clipped to~$[-1,1]$, random dwell times between $1$ and $3\,$s, horizon~$N = 30$. Table~\ref{tab:weighted} reports the mean closed-loop ISE normalized by the preview ISE and the mean first-control mismatch $|u_0^{\mathrm{cond}} - u_0^{\mathrm{prev}}|$.

\begin{table}[t]
    \centering
    \caption{Random-reference study over 40 piecewise-constant trajectories. Moderate or larger first-control weighting nearly recovers full preview on both closed-loop tracking and the first applied control action.}
    \label{tab:weighted}
    \begin{tabular}{l c c}
        \hline
        Weighting & Mean ISE / Preview & Mean $|u_0 - u_0^{\mathrm{prev}}|$ \\
        \hline
        Uniform ($\rho = 1$) & $1.0258$ & $4.81 \times 10^{-2}$ \\
        Moderate ($\rho = 10^2$) & $1.0000$ & $1.66 \times 10^{-5}$ \\
        Strong ($\rho = 10^6$) & $1.0000$ & $1.66 \times 10^{-13}$ \\
        \hline
    \end{tabular}
\end{table}

This study makes the role of~$S_W$ clearer than the step example alone. Uniform weighting is already close in ISE, but it still perturbs the first control. Moderate or larger emphasis on~$u_0^*$ almost eliminates that mismatch and, in these trials, makes the closed-loop ISE indistinguishable from full preview. For the strongest weight, the remaining first-control mismatch is at machine precision. This is why the rest of the constrained-MPC study uses the strongly weighted condensation.

\subsection{Complexity Reduction for Explicit MPC}

To illustrate the complexity reduction in explicit MPC, Table~\ref{tab:complexity} reports the parameter dimension~$n_\theta$ and the number of critical regions for the double-integrator example at several horizons. The explicit solutions are computed with the parametric QP solver of~\cite{arnstrom2024pdaqp}. The parameter dimension and number of critical regions directly relate to the memory footprint, which is often the bottleneck for using explicit MPC. Moreover, they also directly correlate with the tractability of constructing a binary search tree for efficient evaluation of the resulting piecewise-affine control law online \cite{tondel2003evaluation}.

\begin{table}[t]
    \centering
    \caption{Explicit MPC complexity for the double-integrator example. Ref.\ condensation and no-preview MPC have $n_\theta = 3$ parameters for all horizons. Full-preview MPC has $n_\theta = 2 + N$, its region count grows rapidly with $N$, and for $N=50$ we could not compute the explicit solution in our setup.}
    \label{tab:complexity}
    \begin{tabular}{r | c c | c c}
        \hline
        & \multicolumn{2}{c}{No preview / Ref.\ cond.} & \multicolumn{2}{c}{Full preview} \\
        $N$ & $n_\theta$ & Regions & $n_\theta$ & Regions \\
        \hline
         5  & 3 &     3 &  7 &     3 \\
        10  & 3 &    13 & 12 &    13 \\
        15  & 3 &    23 & 17 &    53 \\
        20  & 3 &    43 & 22 & 909  \\
        25  & 3 &    107 & 27 & 20449 \\
        50  & 3 & 1{,}467 & 52 & \multicolumn{1}{c}{---} \\
        \hline
    \end{tabular}
\end{table}

For the short horizons $N \in \{5,10\}$, the two formulations produce the same number of critical regions. In this range, the extra preview parameters change the linear term of the parametric QP but do not yet change which constraints become active. At $N=15$, the picture changes: full preview jumps to $53$ regions, while no preview and reference condensation stay at $23$. From there, the larger preview parameter vector keeps increasing the critical regions, reaching $909$ regions at $N=20$ and $20{,}449$ at $N=25$, while reference condensation still keeps $n_\theta = 3$.
For $N = 50$, the gap is larger still: $n_\theta = 52$ for full preview versus $n_\theta = 3$ for condensation, and the condensed explicit solution has $1{,}467$ regions. We could not compute the full-preview solution at $N=50$ in our setup. Moreover, each full-preview region would require a polyhedral description in $\mathbb{R}^{52}$ and an affine control law with $53$ coefficients, compared to $\mathbb{R}^3$ and $4$ coefficients with condensation.
The bottom line is: \textit{reference condensation makes preview-capable explicit MPC much more practical.}

\section{Conclusion}
\label{sec:conclusion}

We introduced reference condensation, a method that compresses a future reference trajectory into one setpoint through a linear map. Numerical experiments on a constrained double integrator and a higher-order aircraft benchmark showed that the weighted condensation tracks almost identically to full preview while reducing the parameter dimension from $n_x + Nn_r$ to $n_x + n_r$. For explicit MPC, the reduction is dramatic: at horizon $N=25$, reference condensation yields 107 critical regions in dimension 3, while full preview requires $20{,}449$ critical regions in dimension 27.

\bibliographystyle{IEEEtran}
{\footnotesize\bibliography{lib}}

\begin{thebibliography}{10}
\providecommand{\url}[1]{#1}
\csname url@samestyle\endcsname
\providecommand{\newblock}{\relax}
\providecommand{\bibinfo}[2]{#2}
\providecommand{\BIBentrySTDinterwordspacing}{\spaceskip=0pt\relax}
\providecommand{\BIBentryALTinterwordstretchfactor}{4}
\providecommand{\BIBentryALTinterwordspacing}{\spaceskip=\fontdimen2\font plus
\BIBentryALTinterwordstretchfactor\fontdimen3\font minus \fontdimen4\font\relax}
\providecommand{\BIBforeignlanguage}[2]{{%
\expandafter\ifx\csname l@#1\endcsname\relax
\typeout{** WARNING: IEEEtran.bst: No hyphenation pattern has been}%
\typeout{** loaded for the language `#1'. Using the pattern for}%
\typeout{** the default language instead.}%
\else
\language=\csname l@#1\endcsname
\fi
#2}}
\providecommand{\BIBdecl}{\relax}
\BIBdecl

\bibitem{birla2015optimal}
N.~Birla and A.~Swarup, ``Optimal preview control: A review,'' \emph{Optimal Control Applications and Methods}, vol.~36, no.~2, pp. 241--268, 2015.

\bibitem{tomizuka1975optimal}
M.~Tomizuka, ``Optimal continuous finite preview problem,'' \emph{IEEE Transactions on Automatic Control}, vol.~20, no.~3, pp. 362--365, 1975.

\bibitem{rawlings2017model}
J.~B. Rawlings, D.~Q. Mayne, and M.~Diehl, \emph{Model Predictive Control: Theory, Computation, and Design}, 2nd~ed.\hskip 1em plus 0.5em minus 0.4em\relax Nob Hill Publishing, 2017.

\bibitem{borrelli2017predictive}
F.~Borrelli, A.~Bemporad, and M.~Morari, \emph{Predictive Control for Linear and Hybrid Systems}.\hskip 1em plus 0.5em minus 0.4em\relax Cambridge University Press, 2017.

\bibitem{bemporad2002explicit}
A.~Bemporad, M.~Morari, V.~Dua, and E.~N. Pistikopoulos, ``The explicit linear quadratic regulator for constrained systems,'' \emph{Automatica}, vol.~38, no.~1, pp. 3--20, 2002.

\bibitem{alessio2009survey}
A.~Alessio and A.~Bemporad, ``A survey on explicit model predictive control,'' in \emph{Nonlinear Model Predictive Control}.\hskip 1em plus 0.5em minus 0.4em\relax Springer, 2009, pp. 345--369.

\bibitem{garone2017reference}
E.~Garone, S.~Di~Cairano, and I.~Kolmanovsky, ``Reference and command governors for systems with constraints: A survey on theory and applications,'' \emph{Automatica}, vol.~75, pp. 306--328, 2017.

\bibitem{limon2008mpc}
D.~Limon, I.~Alvarado, T.~Alamo, and E.~F. Camacho, ``{MPC} for tracking piecewise constant references for constrained linear systems,'' \emph{Automatica}, vol.~44, no.~9, pp. 2382--2387, 2008.

\bibitem{krupa2024model}
P.~Krupa, J.~K{\"o}hler, A.~Ferramosca, I.~Alvarado, M.~N. Zeilinger, T.~Alamo, and D.~Limon, ``Model predictive control for tracking using artificial references: Fundamentals, recent results and practical implementation,'' in \emph{2024 IEEE 63rd Conference on Decision and Control (CDC)}.\hskip 1em plus 0.5em minus 0.4em\relax IEEE, 2024, pp. 2977--2991.

\bibitem{anderson2007optimal}
B.~D.~O. Anderson and J.~B. Moore, \emph{Optimal Control: Linear Quadratic Methods}.\hskip 1em plus 0.5em minus 0.4em\relax Courier Corporation, 2007.

\bibitem{golub2013matrix}
G.~H. Golub and C.~F. Van~Loan, \emph{Matrix Computations}, 4th~ed.\hskip 1em plus 0.5em minus 0.4em\relax Johns Hopkins University Press, 2013.

\bibitem{tondel2003evaluation}
P.~T{\o}ndel, T.~A. Johansen, and A.~Bemporad, ``Evaluation of piecewise affine control via binary search tree,'' \emph{Automatica}, vol.~39, no.~5, pp. 945--950, 2003.

\bibitem{arnstrom2022dual}
D.~Arnström, A.~Bemporad, and D.~Axehill, ``A dual active-set solver for embedded quadratic programming using recursive {LDL}$^{T}$ updates,'' \emph{IEEE Transactions on Automatic Control}, vol.~67, no.~8, pp. 4362--4369, 2022.

\bibitem{aircraft1988}
P.~Kapasouris, M.~Athans, and G.~Stein, ``Design of feedback control systems for stable plants with saturating actuators,'' in \emph{Proceedings of the 27th IEEE Conference on Decision and Control}, 1988, pp. 469--479 vol.1.

\bibitem{arnstrom2024pdaqp}
D.~Arnstr{\"o}m and D.~Axehill, ``A high-performant multi-parametric quadratic programming solver,'' in \emph{2024 IEEE 63rd Conference on Decision and Control (CDC)}.\hskip 1em plus 0.5em minus 0.4em\relax IEEE, 2024, pp. 303--308.

\end{thebibliography}

\appendices

\section{Proof of Lemma~\ref{lem:track-lqc}}
We write the tracking problem~\eqref{eq:tracking-lqc} in batch form. Rolling out the dynamics~\eqref{eq:dynamics} over the horizon gives
\begingroup
\small
\begin{equation*}
        \underbrace{
            \begin{pmatrix}
                x_1 \\ x_2 \\ \vdots \\ x_N
            \end{pmatrix}
        }_{\triangleq \hat{\mathbf{x}}}
        =
        \underbrace{
            \begin{pmatrix}
                A \\ A^2 \\ \vdots \\ A^{N}
            \end{pmatrix}
        }_{\triangleq \mathbf{A}}
        x_0
        +
        \underbrace{
            \begin{pmatrix}
                B & 0 & \cdots & 0 \\
                AB & B & \cdots & 0 \\
                \vdots & & \ddots &  \\
                A^{N-1}\! B& A^{N-2}\! B& \cdots & B \\
            \end{pmatrix}
        }_{\triangleq \mathbf{B}}
        \mathbf{u}.
\end{equation*}
\endgroup
Defining $\mathbf{C} = \mathrm{diag}(C, \ldots, C)$, $\mathbf{Q} = \mathrm{diag}(Q, \ldots, Q)$, and $\mathbf{R} = \mathrm{diag}(R, \ldots, R)$, the cost~\eqref{eq:cost} becomes
\begin{equation*}
    J = \|\mathbf{C}(\mathbf{A} x_0 + \mathbf{B}\mathbf{u}) - \mathbf{r}\|_{\mathbf{Q}}^2 + \|\mathbf{u}\|_{\mathbf{R}}^2.
\end{equation*}
Setting the gradient with respect to~$\mathbf{u}$ to zero yields $\mathbf{u}^* = F_x x_0 + F_r \mathbf{r}$, where
\begin{align}
    F_x &\triangleq -H^{-1} \mathbf{B}^\top \mathbf{C}^\top \mathbf{Q}\, \mathbf{C} \mathbf{A}, \label{eq:Fx} \\
    F_r &\triangleq \phantom{-}H^{-1} \mathbf{B}^\top \mathbf{C}^\top \mathbf{Q}, \label{eq:Fr}
\end{align}
and with $H \triangleq \mathbf{B}^\top \mathbf{C}^\top \mathbf{Q}\, \mathbf{C} \mathbf{B} + \mathbf{R} \succ 0$.

\end{document}